\setlist[enumerate]{nosep}
\definecolor{labelkey}{rgb}{0,0.08,0.45}
\definecolor{refkey}{rgb}{0,0.6,0.0}
\definecolor{Brown}{rgb}{0.45,0.0,0.05}
\definecolor{lime}{rgb}{0.00,0.8,0.0}
\definecolor{lblue}{rgb}{0.5,0.5,0.99}
\definecolor{OliveGreen}{rgb}{0,0.6,0}
\definecolor{tyrianpurple}{rgb}{0.4, 0.01, 0.24}
\colorlet{hlcyan}{cyan!30}
\def\namedlabel#1#2{\begingroup
   \def\@currentlabel{#2}%
   \label{#1}\endgroup
}
\newcommand{\seppthree}{\setlength{\itemsep}{-3pt}}
\newcommand{\nnn}{\ensuremath{{n\in{\mathbb N}}}}
\newcommand{\thalb}{\ensuremath{\tfrac{1}{2}}}
\newcommand{\menge}[2]{\big\{{#1}~\big |~{#2}\big\}}
\newcommand{\fenv}[1]%
{\ensuremath{\,\overrightarrow{\operatorname{env}}_{#1}}}
\newcommand{\benv}[1]%
{\ensuremath{\,\overleftarrow{\operatorname{env}}_{#1}}}
\newcommand{\scal}[2]{\left\langle{#1},{#2}  \right\rangle}
\newcommand{\cerouno}{\ensuremath{\left]0,1\right[}}
\newcommand{\RR}{\ensuremath{\mathbb R}}
\newcommand{\Fix}{\ensuremath{\operatorname{Fix}}}
\newcommand{\Id}{\ensuremath{\operatorname{Id}}}
\newcommand{\pinf}{\ensuremath{+\infty}}
\def\th@plain{%
	\thm@notefont{}
	\itshape 
}
\def\th@definition{%
	\thm@notefont{}
	\normalfont 
}
\crefname{equation}{}{equations}
\crefname{chapter}{Appendix}{chapters}
\crefname{item}{}{items}
\crefname{enumi}{}{}
\newtheorem{theorem}{Theorem}[section]
\newtheorem{lemma}[theorem]{Lemma}
\newtheorem{corollary}[theorem]{Corollary}
\newtheorem{proposition}[theorem]{Proposition}
\newtheorem{fact}[theorem]{Fact}
\newtheorem{remark}[theorem]{Remark}
\providecommand{\RR}{\mathbb{R}}
\newcommand{\fix}{\ensuremath{\operatorname{Fix}}}
\providecommand{\Id}{\operatorname{{ Id}}}
\providecommand{\fix}{\operatorname{Fix}}
\providecommand{\Id}{\operatorname{Id}}
\providecommand{\RR}{\mathbb{R}}
\definecolor{myblue}{rgb}{0.9,0.9,0.98}
  \newcommand*\mybluebox[1]{%
    \colorbox{myblue}{\hspace{1em}#1\hspace{1em}}}
\begin{document}

\setlength{\abovedisplayskip}{8pt}
\setlength{\belowdisplayskip}{8pt}	
\newsavebox\myboxA
\newsavebox\myboxB
\newlength\mylenA

\newcommand*\xoverline[2][0.75]{%
    \sbox{\myboxA}{$#2$}%
    \setbox\myboxB\null
    \ht\myboxB=\ht\myboxA%
    \dp\myboxB=\dp\myboxA%
    \wd\myboxB=#1\wd\myboxA
    \sbox\myboxB{$\overline{\copy\myboxB}$}
    \setlength\mylenA{\the\wd\myboxA}
    \addtolength\mylenA{-\the\wd\myboxB}%
    \ifdim\wd\myboxB<\wd\myboxA%
       \rlap{\hskip 0.5\mylenA\usebox\myboxB}{\usebox\myboxA}%
    \else
        \hskip -0.5\mylenA\rlap{\usebox\myboxA}{\hskip 0.5\mylenA\usebox\myboxB}%
    \fi}
\makeatother

\makeatletter
\renewcommand*\env@matrix[1][\arraystretch]{%
  \edef\arraystretch{#1}%
  \hskip -\arraycolsep
  \let\@ifnextchar\new@ifnextchar
  \array{*\c@MaxMatrixCols c}}
\makeatother

\providecommand{\wbar}{\xoverline[0.9]{w}}
\providecommand{\ubar}{\xoverline{u}}

\newcommand{\nn}[1]{\ensuremath{\textstyle\mathsmaller{({#1})}}}
\newcommand{\crefpart}[2]{%
  \hyperref[#2]{\namecref{#1}~\labelcref*{#1}~\ref*{#2}}%
}
\newcommand\bigzero{\makebox(0,0){\text{\LARGE0}}}
	

%

\author{
Heinz H.\ Bauschke\thanks{
Mathematics, University
of British Columbia,
Kelowna, B.C.\ V1V~1V7, Canada. E-mail:
\texttt{heinz.bauschke@ubc.ca}.}~~~and~
Yuan Gao\thanks{
Mathematics, University
of British Columbia,
Kelowna, B.C.\ V1V~1V7, Canada. E-mail:
\texttt{ygao75@mail.ubc.ca}.}
}

\title{\textsf{
On a result by Baillon, Bruck, and Reich
}
}

\date{April 5, 2024}

\maketitle

\begin{abstract}
It is well known that the iterates of an averaged nonexpansive mapping may only 
converge weakly to fixed point. A celebrated result 
by Baillon, Bruck, and Reich from 1978 yields strong convergence in the 
presence of linearity. In this paper, we extend this result to allow for 
flexible relaxation parameters. Examples are also provided to illustrate the results.
\end{abstract}
{ 
\small
\noindent
{\bfseries 2020 Mathematics Subject Classification:}
{Primary 
47H05, 
47H09;
Secondary 
47N10, 
65K05, 
90C25.
}

\noindent {\bfseries Keywords:}
Baillon-Bruck-Reich theorem,
nonexpansive mapping,
Krasnosel'ski\u\i-Mann iteration.

\section{Introduction}

Throughout, we assume that 
\begin{empheq}[box=\mybluebox]{equation}
\text{$X$ is
a real Hilbert space 
}
\end{empheq}
with inner product 
$\scal{\cdot}{\cdot}\colon X\times X\to\RR$, 
and induced norm $\|\cdot\|$. 
We also throughout assume that 
\begin{empheq}[box=\mybluebox]{equation}
\text{$R\colon X\to X$ is nonexpansive, 
}
\end{empheq}
i.e., $(\forall x\in X)(\forall y\in X)$
$\|Rx-Ry\|\leq\|x-y\|$, and with a nonempty 
fixed point set 
\begin{empheq}[box=\mybluebox]{equation}
\fix R = \menge{x\in X}{Rx=x} \neq \varnothing. 
\end{empheq}
Finding a point in $\Fix R$ is a basic task in optimization and 
variational analysis because the solutions to many optimization problems can often be understood as fixed point sets of nonexpansive mappings; see, e.g., \cite{BC2017}. 
To find a point in $\fix R$, one employs fixed point iterations.
Iterating $R$ is not guaranteed to work as the case $R=-\Id$ shows. 
However, iterating \emph{underrelaxations} of $R$ is a successful strategy as Krasnosel'ski\u\i\ \cite{Kras} and Mann \cite{Mann} demonstrated. 
Many extensions (see the recent monograph \cite{KMbook}) exist; here, we present here one that is quite flexible and based upon a parameter sequence 
\begin{empheq}[box=\mybluebox]{equation}
(\lambda_n)_\nnn \text{~in $\RR$,}
\end{empheq}
which we fix from now on. 
Given $\lambda\in\RR$, we set 
\begin{empheq}[box=\mybluebox]{equation}
\label{e:Tlambda}
T_{\lambda} := (1-\lambda)\Id + \lambda R
\end{empheq}
or $T_{\lambda,R}$ if we need to stress $R$. 
This allows us to concisely describe the following 
result:

\begin{fact}[Reich] 
\label{f:Reich}
Suppose that $\sum_\nnn (1-\lambda_n)\lambda_n=\pinf$ 
and let $x_0\in X$. 
Then the sequence generated by 
\begin{equation}
(\forall\nnn)\quad
x_{n+1} := T_{\lambda_n}x_n
\end{equation}
converges \emph{weakly} to a point in $\Fix R$.
Moreover, $x_n-Rx_n\to 0$ and $(x_n)_\nnn$
is Fej\'er monotone with respect to $\fix R$. 
\end{fact}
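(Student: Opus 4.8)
The plan is to reproduce the classical Krasnosel'ski\u\i--Mann convergence analysis, organized in three stages: Fej\'er monotonicity together with an associated summable estimate, asymptotic regularity, and finally weak convergence via the demiclosedness principle. I will take as given the standing hypothesis that accompanies this classical result, namely $\lambda_n\in[0,1]$ for every $n$ (this is what makes each $T_{\lambda_n}$ a convex combination of the nonexpansive maps $\Id$ and $R$, hence itself nonexpansive), and I will repeatedly use the elementary identity $\|(1-\lambda)a+\lambda b\|^2=(1-\lambda)\|a\|^2+\lambda\|b\|^2-\lambda(1-\lambda)\|a-b\|^2$.

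\emph{Step 1: Fej\'er monotonicity and a summable estimate.} Fix $z\in\Fix R$. Since $x_{n+1}-z=(1-\lambda_n)(x_n-z)+\lambda_n(Rx_n-Rz)$, applying the identity with $a=x_n-z$, $b=Rx_n-Rz$ and using $\|Rx_n-Rz\|\le\|x_n-z\|$ gives
\[
\|x_{n+1}-z\|^2\ \le\ \|x_n-z\|^2-\lambda_n(1-\lambda_n)\|x_n-Rx_n\|^2 .
\]
In particular $\|x_{n+1}-z\|\le\|x_n-z\|$, which is exactly Fej\'er monotonicity of $(x_n)_\nnn$ with respect to $\Fix R$; hence $(\|x_n-z\|)_\nnn$ converges and $(x_n)_\nnn$ is bounded. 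Telescoping the displayed inequality yields $\sum_\nnn\lambda_n(1-\lambda_n)\|x_n-Rx_n\|^2\le\|x_0-z\|^2<\pinf$.

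\emph{Step 2: asymptotic regularity.} This is the one genuinely delicate point: I must upgrade the summable estimate of Step~1 to the full limit $x_n-Rx_n\to 0$, not merely $\liminf_n\|x_n-Rx_n\|=0$. The mechanism is to show that $(\|x_n-Rx_n\|)_\nnn$ is itself nonincreasing. From $x_{n+1}-Rx_n=(1-\lambda_n)(x_n-Rx_n)$ one gets $x_{n+1}-Rx_{n+1}=(1-\lambda_n)(x_n-Rx_n)+(Rx_n-Rx_{n+1})$, and then the nonexpansiveness of $R$ together with $\|x_n-x_{n+1}\|=\lambda_n\|x_n-Rx_n\|$ gives $\|x_{n+1}-Rx_{n+1}\|\le(1-\lambda_n)\|x_n-Rx_n\|+\lambda_n\|x_n-Rx_n\|=\|x_n-Rx_n\|$. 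Consequently, if $\|x_n-Rx_n\|\to\ell>0$ then $\|x_n-Rx_n\|\ge\ell$ for all $n$, so $\sum_\nnn\lambda_n(1-\lambda_n)\|x_n-Rx_n\|^2\ge\ell^2\sum_\nnn(1-\lambda_n)\lambda_n=\pinf$, contradicting Step~1. Hence $x_n-Rx_n\to 0$.

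\emph{Step 3: weak convergence.} Being bounded, $(x_n)_\nnn$ has weak sequential cluster points. I invoke the Browder demiclosedness principle --- $\Id-R$ is demiclosed at $0$ --- so that, in view of $x_n-Rx_n\to 0$, every weak cluster point $x$ of $(x_n)_\nnn$ lies in $\Fix R$. It then remains to apply the standard Opial-type lemma: a sequence that is Fej\'er monotone with respect to a nonempty set $C$ and all of whose weak cluster points lie in $C$ converges weakly to a point of $C$. (For the lemma: by Step~1, $\lim_n\|x_n-u\|$ exists for each $u\in C$, so for two cluster points $u,v\in C$ the quantity $2\scal{x_n}{u-v}=\|x_n-v\|^2-\|x_n-u\|^2+\|u\|^2-\|v\|^2$ converges; extracting subsequences converging weakly to $u$ and to $v$ forces $\|u-v\|^2=0$.) Taking $C=\Fix R$ completes the argument; the demiclosedness principle and the Opial-type lemma I would simply cite, e.g., from \cite{BC2017}. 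As indicated, the main obstacle is Step~2 --- obtaining the full limit $x_n-Rx_n\to0$ --- which is precisely where the monotonicity of $(\|x_n-Rx_n\|)_\nnn$, and hence the nonexpansiveness of $R$ with $\lambda_n\in[0,1]$, is essential.
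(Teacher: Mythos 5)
Your proof is correct. Note, however, that the paper itself does not prove this statement: it simply cites \cite[Theorem~2]{Reich79} and \cite[Theorem~5.15]{BC2017}, so there is no in-paper argument to compare against. What you have written is precisely the classical Krasnosel'ski\u\i--Mann/Groetsch argument that underlies those references: the Fej\'er estimate $\|x_{n+1}-z\|^2\le\|x_n-z\|^2-\lambda_n(1-\lambda_n)\|x_n-Rx_n\|^2$, the monotonicity of $(\|x_n-Rx_n\|)_\nnn$ to upgrade summability to $x_n-Rx_n\to0$, and demiclosedness plus the Opial-type uniqueness of weak cluster points. All three steps check out, and you correctly identify Step~2 as the place where the work happens. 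Your one editorial addition --- making explicit the hypothesis $\lambda_n\in[0,1]$ --- is apt: the Fact as stated only posits $(\lambda_n)_\nnn$ in $\RR$ with $\sum_\nnn(1-\lambda_n)\lambda_n=\pinf$, which does not by itself force $\lambda_n\in[0,1]$, yet that containment is needed for $T_{\lambda_n}$ to be nonexpansive and for both inequalities in Steps~1 and~2; it is indeed assumed in the cited sources.
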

\begin{proof}
This is \cite[Theorem~2]{Reich79}. (See also \cite[Theorem~5.15]{BC2017}.) 
\end{proof}

In contrast, \emph{strong} convergence and identification of the limit is possible when $R$ is \emph{linear} but the parameter sequence is \emph{constant}. 

\begin{fact}[Baillon-Bruck-Reich] 
\label{f:BBR}
Suppose that $R$ is linear and that $\lambda\in\cerouno$. 
Let $x_0\in X$. 
\begin{equation}
(\forall\nnn)\quad
x_{n+1} := T_{\lambda}x_n
\end{equation}
Then
\begin{equation}
x_n \to P_{\Fix R}x_0. 
\end{equation}
\end{fact}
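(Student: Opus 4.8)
The plan is to combine three ingredients: asymptotic regularity of the averaged iteration, the von Neumann mean ergodic theorem, and a short Cesàro-to-strong argument that promotes convergence of the averages $\tfrac1N\sum_{k<N}x_k$ to convergence of the iterates $x_n$ themselves. Linearity of $R$ (hence of $T_\lambda$) is what makes the last step work and is also what pins the ergodic limit to the \emph{orthogonal} projection.

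First I would record some preliminaries. Since $\lambda\neq0$, $\Fix T_\lambda=\Fix R=:M$, a closed linear subspace, and for a linear nonexpansive operator on a Hilbert space one has $\Fix R=\Fix R^{*}$ (from $\|x-R^{*}x\|^{2}=\|R^{*}x\|^{2}-\|x\|^{2}\le0$ whenever $Rx=x$); hence $M=\Fix T_\lambda^{*}$ and therefore $\overline{\ran(\Id-T_\lambda)}=(\Fix T_\lambda^{*})^{\perp}=M^{\perp}$. Next, $T_\lambda$ is $\lambda$-averaged, so the standard identity gives, for any $y\in M$, $\|x_{n+1}-y\|^{2}\le\|x_{n}-y\|^{2}-\tfrac{1-\lambda}{\lambda}\|x_{n}-x_{n+1}\|^{2}$; telescoping yields $\sum_{n}\|x_{n}-x_{n+1}\|^{2}\le\|x_{0}-y\|^{2}<\infty$, so $\delta_{n}:=\sup_{j\ge n}\|x_{j}-x_{j+1}\|\to0$ (asymptotic regularity; this also follows from Fact~\ref{f:Reich} with $\lambda_{n}\equiv\lambda$, which moreover gives weak convergence, though I will not need it). Finally, since $T_\lambda$ is a linear contraction on a Hilbert space, the mean ergodic theorem gives $\sigma_{N}:=\tfrac1N\sum_{k=0}^{N-1}x_{k}=\bigl(\tfrac1N\sum_{k=0}^{N-1}T_\lambda^{k}\bigr)x_{0}\to P_{M}x_{0}$, and the identification $\overline{\ran(\Id-T_\lambda)}=M^{\perp}$ makes this limit the orthogonal projection; this last point is cheap to make self-contained, by writing $x_{0}=P_{M}x_{0}+q$ with $q\in\overline{\ran(\Id-T_\lambda)}$, noting $\sigma_{N}q=\tfrac1N(z-T_\lambda^{N}z)\to0$ when $q=z-T_\lambda z$, and approximating.

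The crux is to pass from $\sigma_{N}\to P_{M}x_{0}$ to $x_{n}\to P_{M}x_{0}$. Fix $\varepsilon>0$ and choose $N$ with $\|\sigma_{N}-P_{M}x_{0}\|<\varepsilon$. Because $T_\lambda$ is linear, $T_\lambda^{n}\sigma_{N}=\tfrac1N\sum_{k=0}^{N-1}x_{n+k}$, and because $\|T_\lambda^{n}\|\le1$ and $T_\lambda^{n}P_{M}x_{0}=P_{M}x_{0}$, we get $\bigl\|\tfrac1N\sum_{k=0}^{N-1}x_{n+k}-P_{M}x_{0}\bigr\|=\|T_\lambda^{n}(\sigma_{N}-P_{M}x_{0})\|<\varepsilon$ for \emph{every} $n$. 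On the other hand, asymptotic regularity keeps $x_{n},x_{n+1},\dots,x_{n+N-1}$ bunched together: $\|x_{n+k}-x_{n}\|\le\sum_{j=n}^{n+k-1}\|x_{j}-x_{j+1}\|\le(N-1)\delta_{n}$ for $0\le k\le N-1$. Combining, $\|x_{n}-P_{M}x_{0}\|\le\tfrac1N\sum_{k=0}^{N-1}\|x_{n}-x_{n+k}\|+\bigl\|\tfrac1N\sum_{k=0}^{N-1}x_{n+k}-P_{M}x_{0}\bigr\|\le(N-1)\delta_{n}+\varepsilon$, whence $\limsup_{n}\|x_{n}-P_{M}x_{0}\|\le\varepsilon$; since $\varepsilon$ was arbitrary, $x_{n}\to P_{M}x_{0}=P_{\Fix R}x_{0}$.

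The main obstacle is really this last step, or rather spotting it: showing that the iterates themselves converge, not merely their averages, requires asymptotic regularity — which is exactly what $\lambda\in\cerouno$ buys and what a general linear contraction with trivial fixed-point set lacks (the bilateral shift has $\|R^{n}x\|=\|x\|$ yet satisfies the ergodic theorem trivially). Linearity is used twice: to commute $T_\lambda^{n}$ past the Cesàro average, and to pin the ergodic limit to $P_{\Fix R}$. Everything else is soft; the only non-elementary quoted input is the mean ergodic theorem, which as indicated can be replaced by a two-line density argument.
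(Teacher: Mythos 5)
Your proof is correct and complete, but it takes a different route from the paper, which offers no proof of its own and simply cites \cite[Example~5.29]{BC2017} (with the ideas attributed to \cite{BBR} and \cite{BR}). The cited argument is more direct than yours: having established, exactly as you do, that $X=\Fix R\oplus\overline{\ran(\Id-T_\lambda)}$ (via $\Fix R=\Fix R^*$) and that $T_\lambda$ is asymptotically regular, one applies the density argument \emph{directly to the iterates} rather than to the Ces\`aro means --- for $q=z-T_\lambda z$ one has $T_\lambda^n q=T_\lambda^n z-T_\lambda^{n+1}z\to 0$ by asymptotic regularity at $z$, and then $\|T_\lambda^n\|\le 1$ plus density handles general $q\in\overline{\ran(\Id-T_\lambda)}$. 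This makes your detour through the mean ergodic theorem and the Ces\`aro-to-strong upgrade unnecessary, though everything you write is sound: the identification $\Fix T_\lambda^*=\Fix T_\lambda$, the ergodic limit being $P_{\Fix R}x_0$, the commutation $T_\lambda^n\sigma_N=\tfrac1N\sum_{k<N}x_{n+k}$, and the bunching estimate via $\delta_n$ all check out, and the final $\limsup\le\varepsilon$ argument is valid. Two small quibbles: the telescoped inequality gives $\sum_n\|x_n-x_{n+1}\|^2\le\tfrac{\lambda}{1-\lambda}\|x_0-y\|^2$, not $\le\|x_0-y\|^2$ as written (harmless, since only summability is used); and your own remark that the ergodic theorem ``can be replaced by a two-line density argument'' is the tell that the Ces\`aro machinery can be bypassed entirely --- the same two lines applied to $T_\lambda^n$ instead of to $\tfrac1N\sum_{k<N}T_\lambda^k$ finish the proof. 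What your version buys in exchange is a cleaner conceptual split (ergodic theorem as a black box, plus a generic principle that asymptotic regularity upgrades Ces\`aro convergence of orbits to norm convergence), which is of some independent interest.
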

\begin{proof}
This is \cite[Example~5.29]{BC2017}; however, 
the main ideas of the proof are in
\cite{BBR} and \cite{BR}. 
\end{proof}

We are now ready to present our main result, which substantially generalizes \cref{f:BBR} and which we will prove in \cref{s:main}:

\begin{theorem}[main result]
\label{t:main}
Suppose that $R$ is linear and that there exists 
$\varepsilon>0$ such that 
\begin{equation}
(\forall\nnn)\quad
\varepsilon \leq \lambda_n \leq 1-\varepsilon. 
\end{equation}
Let $x_0\in X$ and generate the sequence $(x_n)_\nnn$ by 
\begin{equation}
(\forall\nnn)\quad
x_{n+1} := T_{\lambda_n}x_n. 
\end{equation}
Then 
\begin{equation}
x_n \to P_{\Fix R}x_0. 
\end{equation}
\end{theorem}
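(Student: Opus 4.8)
The plan is to use linearity to reduce the theorem to a statement about a single ``error'' sequence, and then to upgrade asymptotic regularity to strong convergence by a density argument. First I would record the structural consequences of linearity: since $R$ is linear and nonexpansive, $\|R\|\le 1$, and for $x\in\Fix R$ one has $\|R^*x-x\|^2=\|R^*x\|^2-2\langle x,Rx\rangle+\|x\|^2\le 2\|x\|^2-2\|x\|^2=0$, so $\Fix R=\Fix R^*=\ker(\Id-R)$ is a closed linear subspace. Hence $(\Fix R)^\perp=\overline{\ran(\Id-R)}$, and since $\Fix R$ is trivially invariant under $R$ and $R^*$, so is $(\Fix R)^\perp$; consequently each $T_{\lambda_n}=(1-\lambda_n)\Id+\lambda_nR$ --- being a polynomial in $R$ --- fixes $\Fix R$ pointwise and leaves $(\Fix R)^\perp$ invariant. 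Writing $A_n:=T_{\lambda_{n-1}}\cdots T_{\lambda_0}$ (so $x_n=A_nx_0$) and $y_0:=x_0-P_{\Fix R}x_0\in(\Fix R)^\perp$, linearity gives $x_n=P_{\Fix R}x_0+A_ny_0$; thus it suffices to show $A_ny_0\to 0$.

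Next I would prove the standard estimate: for any starting point $w$, any $z\in\Fix R$, and iterates $w_{n+1}:=T_{\lambda_n}w_n$, the identity $\|(1-t)a+tb\|^2=(1-t)\|a\|^2+t\|b\|^2-t(1-t)\|a-b\|^2$ combined with $\|Rw_n-z\|\le\|w_n-z\|$ and $\lambda_n(1-\lambda_n)\ge\varepsilon^2$ gives $\|w_{n+1}-z\|^2\le\|w_n-z\|^2-\varepsilon^2\|w_n-Rw_n\|^2$, hence by telescoping $w_n-Rw_n\to 0$. Now comes the key step: if $y_0=(\Id-R)w\in\ran(\Id-R)$, then since $A_n$ commutes with $R$, hence with $\Id-R$, we get $A_ny_0=(\Id-R)A_nw=w_n-Rw_n\to 0$, where $w_n:=A_nw$. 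For an arbitrary $y_0\in(\Fix R)^\perp=\overline{\ran(\Id-R)}$ and any $\delta>0$, pick $\tilde y_0\in\ran(\Id-R)$ with $\|y_0-\tilde y_0\|\le\delta$; since $\|A_n\|\le 1$, we have $\|A_ny_0\|\le\|A_n\tilde y_0\|+\delta$, so letting $n\to\infty$ and then $\delta\downarrow 0$ yields $A_ny_0\to 0$, and together with the reduction above, $x_n\to P_{\Fix R}x_0$.

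The main obstacle is exactly the gap between weak and strong convergence: general nonexpansiveness yields only weak convergence of $(x_n)$ (\cref{f:Reich}), and linearity is what closes the gap --- through the commutation identity $A_n(\Id-R)=(\Id-R)A_n$, which converts the readily available asymptotic regularity $w_n-Rw_n\to 0$ into strong convergence of $A_ny_0$ on the dense subspace $\ran(\Id-R)$, after which uniform boundedness and density finish the job. Two supporting points need care: that $\Fix R=\Fix R^*$ (without which $(\Fix R)^\perp$ need not be $R$-invariant and the reduction to $y_0\perp\Fix R$ would fail) and that the two subspaces are $T_{\lambda_n}$-invariant. Note also that the hypothesis $\varepsilon\le\lambda_n\le1-\varepsilon$ is used only through $\lambda_n(1-\lambda_n)\ge\varepsilon^2$, so that \cref{f:BBR} is genuinely recovered as the constant-step special case.
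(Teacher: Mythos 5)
Your proof is correct, but it takes a genuinely different route from the paper's. The paper argues by \emph{comparison with the constant-step iteration}: \cref{p:1228} shows that $\lambda\mapsto\|T_\lambda x\|^2$ is a convex quadratic minimized at $\lambda_x\geq\thalb$, whence $\|T_{\lambda_n}(\cdot)\|\leq\|T_\varepsilon(\cdot)\|$ pointwise (\cref{c:1228}); combining this with the commutativity $T_\lambda T_\mu=T_\mu T_\lambda$ (\cref{l:1228}) gives the term-by-term domination $\|x_n-P_{\Fix R}x_0\|\leq\|T_\varepsilon^{n+1}x_0-P_{\Fix R}x_0\|$ (\cref{p:1229}), and the conclusion follows by invoking the classical Baillon--Bruck--Reich theorem (\cref{f:BBR}) as a black box. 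You instead give a self-contained direct proof: the orthogonal splitting $X=\Fix R\oplus\overline{\ran(\Id-R)}$ (for which your verification of $\Fix R=\Fix R^*$ is exactly the point that needs care, and is done correctly), asymptotic regularity from $\sum_\nnn\lambda_n(1-\lambda_n)\|w_n-Rw_n\|^2<\pinf$, the commutation $A_n(\Id-R)=(\Id-R)A_n$ to obtain $A_ny_0\to0$ on the dense subspace $\ran(\Id-R)$, and $\|A_n\|\leq1$ plus density to finish. In effect you rerun the classical proof of \cref{f:BBR} with variable parameters rather than reducing to it; this buys independence from \cref{f:BBR} (which your argument reproves as the constant case) and isolates the weakest hypothesis actually used, namely $\inf_\nnn\lambda_n(1-\lambda_n)>0$. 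What it does not give you is the quantitative domination by the constant-step orbit and the optimal step $\lambda_x$, which the paper exploits further: \cref{c:1228} feeds directly into the averaged-mapping version (via $\overline{\lambda}_R=1/(2\kappa(R))$, allowing $\lambda_n$ up to $2\overline{\lambda}-\varepsilon>1$) and into the adaptive algorithm aBBR of \cref{t:aBBR}, neither of which falls out of your density argument without extra work.
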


The remainder of the paper is organized as follows.
In \cref{s:main}, we provide the proof of \cref{t:main}. 
Variants of \cref{t:main} are discussed in \cref{s:var}. 
The notation we employ in this paper is fairly standard and 
follows largely \cite{BC2017}.

\section{Proof of the main result}

\label{s:main}

From now on, we additionally assume that 
\begin{empheq}[box=\mybluebox]{equation}
\text{$R$ is linear and $R\neq \Id$.}
\end{empheq}

The idea for the next result can be traced back to a paper by Gearhard and Koshy 
(see \cite[Acceleration~3.2]{GK} and also \cite{BDHP}):

\begin{proposition}
\label{p:1228}
Suppose that $x\in X\smallsetminus\Fix R$.
Set 
\begin{equation}
\lambda_x := \scal{x}{x-Rx}/\|x-Rx\|^2. 
\end{equation}
Then $\lambda_x\geq \thalb$. 
Let $\varepsilon\in \bigl]0,\thalb\bigr]$.
Then 
\begin{equation}
[\varepsilon,1-\varepsilon]
\subseteq 
[\varepsilon,2\lambda_x-\varepsilon] 
\quad\text{and}\quad 
(\forall \lambda\in[\varepsilon,2\lambda_x-\varepsilon]) \;\; 
\|T_{\lambda_x}x\|\leq \|T_\lambda x\|\leq \|T_\varepsilon x\|.
\end{equation}
\end{proposition}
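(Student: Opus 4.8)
The plan is to reduce the statement to elementary facts about a one-variable quadratic. Put $u := x - Rx$; since $x \notin \Fix R$ we have $u \neq 0$, so $\lambda_x = \scal{x}{u}/\|u\|^2$ is well defined. From $T_\lambda x = (1-\lambda)x + \lambda Rx = x - \lambda u$ we obtain, for every $\lambda \in \RR$,
\[
q(\lambda) := \|T_\lambda x\|^2 = \|x\|^2 - 2\lambda\scal{x}{u} + \lambda^2\|u\|^2,
\]
which is a strictly convex parabola in $\lambda$ (leading coefficient $\|u\|^2 > 0$) whose vertex sits precisely at $\lambda = \lambda_x$; in particular $q$ is symmetric about $\lambda_x$ and minimized there.

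The first step is to show $\lambda_x \geq \thalb$. A short computation — expanding $\scal{x}{x-Rx}$ and $\|x-Rx\|^2$ and cancelling the cross terms — yields the identity $\scal{x}{x-Rx} - \thalb\|x-Rx\|^2 = \thalb(\|x\|^2 - \|Rx\|^2)$, so $\lambda_x \geq \thalb$ is equivalent to $\|Rx\| \leq \|x\|$. Since $R$ is linear we have $R0 = 0$, and nonexpansiveness then gives $\|Rx\| = \|Rx - R0\| \leq \|x - 0\| = \|x\|$, so $\lambda_x \geq \thalb$. Consequently $2\lambda_x - \varepsilon \geq 1 - \varepsilon \geq \varepsilon$ (the last step uses $\varepsilon \leq \thalb$), which shows at once that $[\varepsilon, 2\lambda_x - \varepsilon]$ is a genuine interval containing $\varepsilon$ and that $[\varepsilon, 1-\varepsilon] \subseteq [\varepsilon, 2\lambda_x - \varepsilon]$.

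For the chain of norm inequalities I would argue purely from the shape of $q$. The left inequality $\|T_{\lambda_x}x\| \leq \|T_\lambda x\|$ holds for every real $\lambda$ because $\lambda_x$ is the global minimizer of $q$. For the right inequality, fix $\lambda \in [\varepsilon, 2\lambda_x - \varepsilon]$; this interval has midpoint $\lambda_x$, so symmetry of $q$ about $\lambda_x$ gives $q(\varepsilon) = q(2\lambda_x - \varepsilon)$, and convexity of $q$ implies its maximum over the interval is attained at an endpoint, whence $q(\lambda) \leq \max\{q(\varepsilon), q(2\lambda_x - \varepsilon)\} = q(\varepsilon)$. Taking square roots gives $\|T_\lambda x\| \leq \|T_\varepsilon x\|$, which completes the proof. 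There is no real obstacle here; the only points requiring care are that $u \neq 0$ (guaranteed by $x \in X \smallsetminus \Fix R$, so that $\lambda_x$ and the parabola make sense) and the one-line algebraic identity that ties $\lambda_x \geq \thalb$ to the contraction inequality $\|Rx\| \leq \|x\|$.
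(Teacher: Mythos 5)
Your proof is correct and follows essentially the same route as the paper: both analyze the convex quadratic $\lambda\mapsto\|T_\lambda x\|^2$ with vertex at $\lambda_x$, reduce $\lambda_x\geq\thalb$ to $\|Rx\|\leq\|x\|$ (valid since $R$ is linear and nonexpansive, so $0\in\Fix R$), and use symmetry of the parabola about $\lambda_x$ on the interval $[\varepsilon,2\lambda_x-\varepsilon]$ to bound $\|T_\lambda x\|$ by the common endpoint value $\|T_\varepsilon x\|$. The only cosmetic difference is that the paper completes the square and inspects $f'$ explicitly, while you invoke convexity to place the maximum at an endpoint.
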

\begin{proof}
Recalling \cref{e:Tlambda}, we define the quadratic $f$ by 
\begin{align*}
f(\lambda) &:= \|T_\lambda x\|^2 = \|x+\lambda(Rx-x)\|^2\\
&=\lambda^2\|x-Rx\|^2 + 2\lambda\scal{x}{Rx-x}+\|x\|^2. 
\end{align*}
Completing the square yields
\begin{equation*}
f(\lambda) = \|x-Rx\|^2\Big(\lambda - \frac{\scal{x}{x-Rx}}{\|x-Rx\|^2} \Big)^2 + \|x\|^2 - \frac{\scal{x}{x-Rx}^2}{\|x-Rx\|^2}. 
\end{equation*}
Hence the unique minimizer of $f$ is $\lambda_x$ and
$\min f(\RR) = \|x\|^2-\scal{x}{x-Rx}^2/\|x-Rx\|^2$. 
Because $R$ is nonexpansive and $0\in\Fix R$, we have 
$\|Rx\|\leq \|x\|$
$\Leftrightarrow$
$0\leq \|x\|^2-\|Rx\|^2=\scal{x+Rx}{x-Rx}=\scal{2x-(x-Rx)}{x-Rx}$
$\Leftrightarrow$
$2\scal{x}{x-Rx}\geq \|x-Rx\|^2$
$\Leftrightarrow$
$\lambda_x\geq\thalb$
as claimed. 
This yields $2\lambda_x-\varepsilon\geq 1-\varepsilon$ and so 
$[\varepsilon,1-\varepsilon]
\subseteq 
[\varepsilon,2\lambda_x-\varepsilon]$, also as claimed. 
On the other hand, 
$f'(\lambda) = 2\|x-Rx\|^2(\lambda-\lambda_x)$.
Altogether, 
$f$ is a convex quadratric, 
$f$ strictly decreases on $\left]-\infty,\lambda_x\right]$, 
$f$ strictly increases on $\left[\lambda_x,+\infty\right[$, and
$(\forall \delta\geq 0)$
$f(\lambda_x-\delta)=f(\lambda_x+\delta)$.
Finally, let $\lambda\in[\varepsilon,2\lambda_x-\varepsilon]$
and set $\delta := \lambda_x-\varepsilon \geq \thalb-\varepsilon\geq 0$.
Then $\lambda_x-\delta = \varepsilon$, 
$\lambda_x+\delta = 2\lambda_x-\varepsilon$, 
and therefore
$f(\lambda_x)\leq f(\lambda)\leq f(\varepsilon)=f(2\lambda_x-\varepsilon)$. 
\end{proof}

From now on, we set 

\begin{empheq}[box=\mybluebox]{equation}
\label{e:olam}
\overline{\lambda} := \inf_{x\in X\smallsetminus \Fix R}\frac{\scal{x}{x-Rx}}{\|x-Rx\|^2}.
\end{empheq}
If we wish to stress $R$, we also write $\overline{\lambda}_R$  for $\overline{\lambda}$. 

\begin{corollary}
\label{c:1228}
We have 
\begin{equation}
\overline{\lambda}\geq \thalb
\;\;\text{and}\;\;
\bigl(\forall \mu\in\bigl]0,\thalb\bigr]\bigr)
(\forall\lambda\in[\mu,2\overline{\lambda}-\mu])
(\forall x\in X)\quad 
\|T_\lambda x\|\leq\|T_\mu x\|. 
\end{equation}
\end{corollary}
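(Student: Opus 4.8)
The plan is to derive both assertions directly from \cref{p:1228}. For the first, observe that since $R\neq\Id$ the set $X\smallsetminus\Fix R$ is nonempty, so the infimum in \cref{e:olam} is taken over a nonempty set and $\overline{\lambda}$ is a well-defined element of $\RR$. Now \cref{p:1228} furnishes, for every $x\in X\smallsetminus\Fix R$, the inequality $\lambda_x=\scal{x}{x-Rx}/\|x-Rx\|^2\geq\thalb$; taking the infimum over all such $x$ yields $\overline{\lambda}\geq\thalb$.

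For the second assertion, I would fix $\mu\in\bigl]0,\thalb\bigr]$, $\lambda\in[\mu,2\overline{\lambda}-\mu]$, and $x\in X$, and distinguish two cases. If $x\in\Fix R$, then $Rx=x$, so $T_\lambda x=x=T_\mu x$ and the desired inequality holds with equality. If instead $x\in X\smallsetminus\Fix R$, then by the definition of $\overline{\lambda}$ we have $\overline{\lambda}\leq\lambda_x$, hence $2\overline{\lambda}-\mu\leq 2\lambda_x-\mu$, and therefore $\lambda\in[\mu,2\lambda_x-\mu]$. Applying \cref{p:1228} with $\varepsilon:=\mu$ (permissible since $\mu\in\bigl]0,\thalb\bigr]$) then gives $\|T_\lambda x\|\leq\|T_\varepsilon x\|=\|T_\mu x\|$, which is exactly what is claimed.

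There is no genuine obstacle here: the corollary is simply the ``uniform in $x$'' repackaging of \cref{p:1228}. The only points requiring a moment's care are the remark that $X\smallsetminus\Fix R\neq\varnothing$ (so that $\overline{\lambda}\in\RR$ and $2\overline{\lambda}-\mu$ is meaningful) and the separate, entirely trivial, treatment of the points of $\Fix R$, for which \cref{p:1228} does not apply but the conclusion is immediate.
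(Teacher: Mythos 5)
Your proposal is correct and follows essentially the same route as the paper's proof: take the infimum of $\lambda_x\geq\thalb$ over $X\smallsetminus\Fix R$, then use $\overline{\lambda}\leq\lambda_x$ to get $\lambda\leq 2\lambda_x-\mu$ and invoke \cref{p:1228} with $\varepsilon=\mu$. Your explicit treatment of the trivial case $x\in\Fix R$ and of the nonemptiness of $X\smallsetminus\Fix R$ is a small point of extra care that the paper leaves implicit.
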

\begin{proof}
Adopt the notation from \cref{p:1228}. 
Then $\overline{\lambda} = \inf_{x\in X\smallsetminus \Fix R}\lambda_x \geq \thalb$ by \cref{p:1228}, and also 
$\overline{\lambda}<\pinf$. 
Next, let $\mu\in\bigl]0,\thalb\bigr]$,
let $\lambda \in [\mu,2\overline{\lambda}-\mu]$, and 
let $x\in X$. 
Then $\lambda\leq 2\lambda_x-\mu$ and 
\cref{p:1228} yields
$\|T_\lambda x\|\leq \|T_\mu x\|$ as claimed. 
\end{proof}

\begin{lemma}
\label{l:1228}
Let $\lambda,\mu$ be in $\RR$.
Then 
\begin{equation}
T_\lambda T_\mu = T_\mu T_\lambda.
\end{equation}
\end{lemma}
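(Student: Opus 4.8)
The statement to prove is that $T_\lambda T_\mu = T_\mu T_\lambda$ for all $\lambda,\mu\in\RR$, where $T_\lambda = (1-\lambda)\Id + \lambda R$ and $R$ is linear.

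The plan is direct computation, exploiting linearity of $R$. Write $T_\lambda = \Id + \lambda(R-\Id)$, so that $T_\lambda T_\mu x = T_\lambda\bigl(x + \mu(Rx - x)\bigr)$. Since $R$ is linear, $R$ distributes over this sum, and after expanding, the cross term that appears is $\lambda\mu(R-\Id)(R-\Id)x = \lambda\mu(R^2 - 2R + \Id)x$, which is symmetric in $\lambda$ and $\mu$. Hence $T_\lambda T_\mu = \Id + (\lambda+\mu)(R-\Id) + \lambda\mu(R-\Id)^2 = T_\mu T_\lambda$.

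More precisely, I would carry it out in two short steps. First, note that $\Id$ and $R$ commute trivially (as linear operators on $X$), and observe that for linear operators a product of affine combinations of $\Id$ and $R$ expands by bilinearity. Second, compute
\begin{equation*}
T_\lambda T_\mu = \bigl((1-\lambda)\Id + \lambda R\bigr)\bigl((1-\mu)\Id + \mu R\bigr)
= (1-\lambda)(1-\mu)\Id + \bigl[(1-\lambda)\mu + \lambda(1-\mu)\bigr]R + \lambda\mu R^2,
\end{equation*}
using linearity of $R$ to justify that $R$ applied to $(1-\mu)x + \mu Rx$ equals $(1-\mu)Rx + \mu R^2 x$. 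Every coefficient in the last expression is symmetric under interchange of $\lambda$ and $\mu$, so the same expression results from expanding $T_\mu T_\lambda$, giving the claim.

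There is essentially no obstacle here: the only thing being used is that $R$ is linear (so that it commutes with scalar multiplication and respects addition, allowing the bilinear expansion) together with the fact that any operator commutes with itself and with $\Id$. The result would fail for nonlinear $R$ precisely because $R\bigl((1-\mu)x + \mu Rx\bigr) \neq (1-\mu)Rx + \mu R^2 x$ in general. So the proof is a one-line expansion with a remark that linearity is what makes it go through.
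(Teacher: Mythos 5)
Your proof is correct and follows essentially the same route as the paper: expand $T_\lambda T_\mu$ by bilinearity (using linearity of $R$) into $(1-\lambda)(1-\mu)\Id + \bigl[(1-\lambda)\mu+\lambda(1-\mu)\bigr]R + \lambda\mu R^2$ and observe the coefficients are symmetric in $\lambda$ and $\mu$. Your added remark that linearity of $R$ is exactly what makes the expansion valid is a nice touch, but the argument itself is identical to the paper's.
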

\begin{proof}
Indeed, 
\begin{align*}
T_\lambda T_\mu 
&= \big((1-\lambda)\Id+\lambda R\big)\big(1-\mu)\Id+\mu R\big)\\
&=(1-\lambda)(1-\mu)\Id + \big((1-\lambda)\mu+\lambda(1-\mu)\big)R+
\lambda\mu R^2\\
&= 
T_\mu T_\lambda
\end{align*}
and we are done. 
\end{proof}

\begin{proposition}
\label{p:1229}
Suppose $(\mu_n)_\nnn$ is a sequence in $[0,1]$ such that 
$(\forall \nnn)$ 
$\|T_{\lambda_n}(\cdot)\| \leq \|T_{\mu_n}(\cdot)\|$. 
Then 
\begin{equation}
(\forall x_0\in X)(\forall y\in \Fix R)(\forall\nnn)\quad
\big\|T_{\lambda_n}\cdots T_{\lambda_1}T_{\lambda_0}x_0-y\big\|
\leq
\big\|T_{\mu_n}\cdots T_{\mu_1}T_{\mu_0}x_0-y\big\|. 
\end{equation}
\end{proposition}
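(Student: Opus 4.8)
The plan is to first reduce to the fixed point $y=0$ and then induct on $n$, using the commutativity established in \cref{l:1228} to slide each newly introduced $T_{\mu}$-factor through the remaining product.

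First I would record two elementary facts. Since $R$ is linear, every $T_\lambda=(1-\lambda)\Id+\lambda R$ is linear; and since $Ry=y$ whenever $y\in\Fix R$, we get $T_\lambda y=y$ for every $\lambda\in\RR$ and every $y\in\Fix R$, whence $T_{\lambda_n}\cdots T_{\lambda_0}y=y$ and likewise $T_{\mu_n}\cdots T_{\mu_0}y=y$. Consequently, setting $z:=x_0-y$ and using linearity,
\[
T_{\lambda_n}\cdots T_{\lambda_0}x_0-y=T_{\lambda_n}\cdots T_{\lambda_0}z
\quad\text{and}\quad
T_{\mu_n}\cdots T_{\mu_0}x_0-y=T_{\mu_n}\cdots T_{\mu_0}z,
\]
so it suffices to prove that $\|T_{\lambda_n}\cdots T_{\lambda_0}z\|\leq\|T_{\mu_n}\cdots T_{\mu_0}z\|$ for every $z\in X$ and every $\nnn$. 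Note here that the hypothesis must be read pointwise, i.e., $(\forall u\in X)\ \|T_{\lambda_n}u\|\leq\|T_{\mu_n}u\|$ (this is the form in which \cref{c:1228} delivers it), and it is precisely this pointwise form that gets used below.

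The induction on $n$ then runs as follows. For $n=0$ the desired inequality is the hypothesis evaluated at the point $z$. Assume it holds for $n$ and all $z\in X$, and fix $z\in X$; put $u:=T_{\lambda_n}\cdots T_{\lambda_0}z$. Then the hypothesis (for the index $n+1$, at the point $u$) gives
\[
\|T_{\lambda_{n+1}}T_{\lambda_n}\cdots T_{\lambda_0}z\|=\|T_{\lambda_{n+1}}u\|\leq\|T_{\mu_{n+1}}u\|=\|T_{\mu_{n+1}}T_{\lambda_n}\cdots T_{\lambda_0}z\|.
\]
By \cref{l:1228}, $T_{\mu_{n+1}}$ commutes with each $T_{\lambda_k}$, so the last norm equals $\|T_{\lambda_n}\cdots T_{\lambda_0}\bigl(T_{\mu_{n+1}}z\bigr)\|$; applying the inductive hypothesis at the point $T_{\mu_{n+1}}z$ bounds this by $\|T_{\mu_n}\cdots T_{\mu_0}\bigl(T_{\mu_{n+1}}z\bigr)\|$, which by \cref{l:1228} again equals $\|T_{\mu_{n+1}}T_{\mu_n}\cdots T_{\mu_0}z\|$. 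Chaining these (in)equalities closes the induction, and the reduction of the previous paragraph finishes the proof.

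I do not expect a genuine obstacle here; the argument is short. The only points needing care are that the hypothesis is a \emph{pointwise} norm inequality — so it may legitimately be invoked at the intermediate iterate $u$ and at $T_{\mu_{n+1}}z$, rather than merely as an operator-norm bound — and that commutativity is exactly the device that moves $T_{\mu_{n+1}}$ from outside the $\lambda$-product to inside it, so that the inductive hypothesis becomes applicable.
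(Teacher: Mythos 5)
Your proof is correct and is essentially the same argument as the paper's: both rest on the pointwise reading of the hypothesis, the identity $T_\lambda y=y$ for $y\in\Fix R$ together with linearity to absorb $y$, and \cref{l:1228} to slide each freshly introduced $T_{\mu_k}$ past the remaining $\lambda$-factors. The paper merely presents the induction unrolled as a single chain of inequalities ending in a ``$\vdots$'', whereas you package the same peeling step as a formal induction on $n$.
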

\begin{proof}
Let $x_0\in X$, 
$y\in\Fix R$,
and $\nnn$. 
Then
\begin{align*}
\big\|T_{\lambda_n}\cdots T_{\lambda_1}T_{\lambda_0}x_0-y\big\|
&=
\big\|T_{\lambda_n}\big(T_{\lambda_{n-1}}\cdots T_{\lambda_1}T_{\lambda_0}(x_0-y)\big)\big\|
\tag{because $y\in\Fix R$}
\\
&\leq
\big\|T_{\mu_n}\big(T_{\lambda_{n-1}}\cdots T_{\lambda_1}T_{\lambda_0}(x_0-y)\big)\big\|
\tag{by assumption}
\\
&=
\big\|T_{\lambda_{n-1}}\big(T_{\lambda_{n-2}}\cdots T_{\lambda_1}T_{\lambda_0}T_{\mu_n}(x_0-y)\big)\big\|
\tag{by \cref{l:1228}}
\\
&\leq
\big\|T_{\mu_{n-1}}\big(T_{\lambda_{n-2}}\cdots T_{\lambda_1}T_{\lambda_0}T_{\mu_n}(x_0-y)\big)\big\|
\tag{by assumption}
\\
&=
\big\|T_{\lambda_{n-2}}\big(T_{\lambda_{n-3}}\cdots T_{\lambda_1}T_{\lambda_0}T_{\mu_n}T_{\mu_{n-1}}(x_0-y)\big)\big\|
\tag{by \cref{l:1228}}
\\
&\;\,\,\vdots \\
&\leq
\big\|T_{\mu_n}\cdots T_{\mu_1}T_{\mu_0}(x_0-y)\big\|
\\
&=
\big\|T_{\mu_n}\cdots T_{\mu_1}T_{\mu_0}x_0-y\big\|
\tag{because $y\in\Fix R$}
\end{align*}
as claimed. 
\end{proof}

We now restate the main result (for the reader's convenience) and prove it:

\begin{theorem}[main result]
\label{t:mainagain}
Suppose that there exists 
$\varepsilon>0$ such that 
$(\forall\nnn)$ 
$\varepsilon \leq \lambda_n \leq 1-\varepsilon$.
Let $x_0\in X$ and 
generate the sequence $(x_n)_\nnn$ by 
$(\forall\nnn)$
$x_{n+1} := T_{\lambda_n}x_n$. 
Then 
$x_n \to P_{\Fix R}x_0$. 
\end{theorem}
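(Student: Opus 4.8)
The plan is to reduce the variable-parameter iteration to the constant-parameter case already settled by the Baillon--Bruck--Reich theorem (\cref{f:BBR}), using the comparison principle of \cref{p:1229}; the point is that all the real work has been front-loaded into the preceding results.

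First I would fix $\mu := \min\{\varepsilon,\thalb\}$, so that $\mu\in\bigl]0,\thalb\bigr]$ and $[\varepsilon,1-\varepsilon]\subseteq[\mu,1-\mu]$. Since $\overline{\lambda}\geq\thalb$ by \cref{c:1228}, we have $1-\mu\leq 2\overline{\lambda}-\mu$, and hence $\lambda_n\in[\mu,2\overline{\lambda}-\mu]$ for every $\nnn$. Then \cref{c:1228} delivers, for each $\nnn$, the pointwise norm domination $\|T_{\lambda_n}(\cdot)\|\leq\|T_{\mu}(\cdot)\|$ on all of $X$.

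Next I would invoke \cref{p:1229} with the constant sequence $\mu_n:=\mu$ and with the fixed point $y:=P_{\Fix R}x_0$; note that $\Fix R=\ker(\Id-R)$ is a closed linear subspace, so $P_{\Fix R}x_0$ is well defined and lies in $\Fix R$. Since $x_{n+1}=T_{\lambda_n}\cdots T_{\lambda_1}T_{\lambda_0}x_0$ and $T_{\mu_n}\cdots T_{\mu_1}T_{\mu_0}=T_\mu^{\,n+1}$, the proposition gives
\[
\bigl\|x_{n+1}-P_{\Fix R}x_0\bigr\|\;\leq\;\bigl\|T_\mu^{\,n+1}x_0-P_{\Fix R}x_0\bigr\|\qquad(\forall\,\nnn).
\]
Finally, because $R$ is linear and $\mu\in\cerouno$, \cref{f:BBR} applied to the constant relaxation $\mu$ yields $T_\mu^{\,n}x_0\to P_{\Fix R}x_0$; hence the right-hand side above tends to $0$, and by the squeeze $x_n\to P_{\Fix R}x_0$.

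I do not expect a genuine obstacle in this final assembly: the substantive ingredients are commutativity of the $T_\lambda$'s (\cref{l:1228}, the only place linearity is truly used), the fact that $\lambda\mapsto\|T_\lambda x\|$ is minimized near $\thalb$ with symmetric growth (\cref{p:1228,c:1228}), and the resulting sandwiching of the variable-parameter orbit between a fixed point and a constant-parameter orbit (\cref{p:1229}). The one thing to be careful about is the interval inclusion $[\varepsilon,1-\varepsilon]\subseteq[\mu,2\overline{\lambda}-\mu]$, which must hold simultaneously for all indices $n$ so that \cref{c:1228} applies uniformly; this is exactly what the lower bound $\overline{\lambda}\geq\thalb$ guarantees.
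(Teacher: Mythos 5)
Your proposal is correct and follows essentially the same route as the paper: dominate each $\|T_{\lambda_n}(\cdot)\|$ by $\|T_\mu(\cdot)\|$ via \cref{c:1228}, transfer this to the whole orbit with \cref{p:1229} and $y=P_{\Fix R}x_0$, and then invoke \cref{f:BBR} for the constant-parameter majorant. The only (harmless) cosmetic difference is your choice $\mu=\min\{\varepsilon,\thalb\}$ where the paper simply takes $\mu=\varepsilon$, which is legitimate since the hypothesis $\varepsilon\leq\lambda_n\leq 1-\varepsilon$ already forces $\varepsilon\leq\thalb$.
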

\begin{proof}
Applying \cref{c:1228} with $\mu=\varepsilon$ yields
$(\forall\nnn)$ 
$\|T_{\lambda_n}(\cdot)\|\leq\|T_{\varepsilon}(\cdot)\|$. 
Next, we apply \cref{p:1229} with $y=P_{\Fix R}x_0$ and 
$(\mu_n)_\nnn = (\varepsilon)_\nnn$ to deduce that 
\begin{equation}
\label{e:magain1}
(\forall\nnn)\quad
\|x_n-P_{\Fix R}x_0\| \leq \|T_\varepsilon^{n+1}x_0-P_{\Fix R}x_0\|. 
\end{equation}
On the other hand, 
\begin{equation}
\label{e:magain2}
T_\varepsilon^{n}x_0\to P_{\Fix R}x_0
\end{equation}
by \cref{f:BBR}. 
The conclusion follows by combining \cref{e:magain1} and \cref{e:magain2}. 
\end{proof}

\begin{remark}
There are numerous papers that use 
the Baillon-Bruck-Reich result (\cref{f:BBR}). 
Whenever this is the case, there is the potential to obtain a more powerful 
result by using the more general \cref{t:main} instead.
For instance, in the recent paper \cite{BSW}, 
the authors study several recent splitting methods 
applied to normal cone operators of closed linear subspaces. A key ingredient was to apply \cref{f:BBR} to deduce 
\begin{equation}
T_\lambda^n x_0 \to P_{\Fix R}x_0,
\end{equation}
where $\lambda\in\cerouno$. A closer inspection of the proofs shows that one may instead work with flexible parameters such as those of \cref{t:main} 
and one thus obtains a more general result.
\end{remark}

\section{Variants}

\label{s:var}

\subsection{Averaged mappings}

Recall that a nonexpansive mapping $S\colon X\to X$ is $\kappa$-averaged,
if $S = (1-\kappa)\Id + \kappa N$ for some nonexpansive mapping $N$ and $\kappa\in[0,1]$. The number
\begin{equation}
\kappa(S) := \min\menge{\kappa\in[0,1]}{\text{$S$ is $\kappa$-averaged}}
\end{equation}
is called the \emph{modulus of averagedness} of $S$.
If $\kappa(S)<1$, then one says that $S$ is averaged. 
Recalling \cref{e:olam}, it follows from \cite[Lemma~2.1]{BBM}
that 
\begin{equation}
\overline{\lambda}_R
= 
\inf_{x\in X\smallsetminus \Fix R}\frac{\scal{x}{x-Rx}}{\|x-Rx\|^2}
= 
\frac{1}{2\sup_{x\in X\smallsetminus\Fix R}\frac{\|x-Rx\|^2}{2\scal{x}{x-Rx}}}
= \frac{1}{2\kappa(R)}.
\end{equation}
Hence we have the equivalence
\begin{equation}
\overline{\lambda}_R>\thalb
\;\;\Leftrightarrow\;\;
\text{$R$ is \emph{averaged}.}
\end{equation}
This allows us to derive the following variant of \cref{t:mainagain}:

\begin{theorem}[main result --- averaged mapping version]
Suppose that $R$ is $\kappa$-averaged for some $\kappa\in\left]0,1\right[$. 
Suppose that $\delta>0$ and that $(\mu_n)_\nnn$ satisfies
$(\forall\nnn)$
$\delta\leq\mu_n\leq\tfrac{1}{\kappa}-\delta$. 
Given $x_0\in X$, generate $(x_n)_\nnn$ by 
$(\forall\nnn)$ 
$x_{n+1} := T_{\mu_n}x_n$.
Then $x_n\to P_{\Fix R}x_0$. 
\end{theorem}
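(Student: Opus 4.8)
The plan is to reduce the claim to the constant-parameter Baillon--Bruck--Reich theorem (\cref{f:BBR}) by squeezing every $T_{\mu_n}$ underneath a single fixed underrelaxation $T_\nu$ with $\nu\in\cerouno$, using the norm-comparison machinery of \cref{c:1228} and \cref{p:1229}. Since $R$ is $\kappa$-averaged we have $\kappa(R)\leq\kappa$, and $\kappa(R)>0$ because $R\neq\Id$; hence, by the identity displayed just before the statement, $\overline{\lambda}_R=1/(2\kappa(R))$ satisfies $\thalb<1/(2\kappa)\leq\overline{\lambda}_R<\pinf$, so in particular $2\overline{\lambda}_R\geq 1/\kappa$. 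Put $\nu:=\min\{\delta,\thalb\}$, so that $\nu\in\bigl]0,\thalb\bigr]$ and $\nu\leq\delta$.

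First I would check that $(\forall\nnn)$ $\mu_n\in[\nu,2\overline{\lambda}_R-\nu]$: the lower bound is $\nu\leq\delta\leq\mu_n$, and the upper bound is $\mu_n\leq\tfrac{1}{\kappa}-\delta\leq 2\overline{\lambda}_R-\delta\leq 2\overline{\lambda}_R-\nu$. (If the interval $[\delta,\tfrac{1}{\kappa}-\delta]$ is empty, the hypothesis on $(\mu_n)_\nnn$ is vacuous and there is nothing to prove, so we may assume it is nonempty.) Then \cref{c:1228}, applied with $\mu=\nu$, yields $(\forall\nnn)$ $\|T_{\mu_n}(\cdot)\|\leq\|T_\nu(\cdot)\|$.

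Next I would invoke \cref{p:1229}; note that \cref{l:1228} and \cref{p:1229} are purely formal consequences of commutativity and a pointwise norm bound, hence remain valid with any relaxation sequence in place of the standing sequence $(\lambda_n)_\nnn$ (equivalently, one may simply take the sequence fixed in \cref{s:main} to be $(\mu_n)_\nnn$ from the outset). Applying \cref{p:1229} with $(\mu_n)_\nnn$ in the role of $(\lambda_n)_\nnn$, the constant sequence $(\nu)_\nnn$ in the role of $(\mu_n)_\nnn$, and $y:=P_{\Fix R}x_0\in\Fix R$, gives
\begin{equation*}
(\forall\nnn)\quad
\|x_{n+1}-P_{\Fix R}x_0\|=\big\|T_{\mu_n}\cdots T_{\mu_0}x_0-P_{\Fix R}x_0\big\|\leq\big\|T_\nu^{\,n+1}x_0-P_{\Fix R}x_0\big\|.
\end{equation*}
Finally, since $\nu\neq 0$ we have $\Fix T_\nu=\Fix R$, so $P_{\Fix R}x_0=P_{\Fix T_\nu}x_0$; and since $R$ is linear and $\nu\in\cerouno$, \cref{f:BBR} gives $T_\nu^{\,n}x_0\to P_{\Fix R}x_0$. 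Combining this with the last display forces $\|x_{n+1}-P_{\Fix R}x_0\|\to 0$, i.e., $x_n\to P_{\Fix R}x_0$, which is the claim.

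I expect no serious obstacle here: the one point deserving a word of care is the bookkeeping that the earlier results of \cref{s:main}, though stated for the fixed sequence $(\lambda_n)_\nnn$, apply equally to the new sequence $(\mu_n)_\nnn$ (whose range may exceed $1$); beyond that, the proof is just the containment check $\{\mu_n\mid\nnn\}\subseteq[\nu,2\overline{\lambda}_R-\nu]$ followed by quotations of \cref{c:1228}, \cref{p:1229}, and \cref{f:BBR}.
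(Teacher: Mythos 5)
Your proof is correct, but it takes a genuinely different route from the paper's. The paper proves this theorem by a rescaling trick: since $R$ is $\kappa$-averaged, it writes $N:=\bigl(R-(1-\kappa)\Id\bigr)/\kappa$, notes that $N$ is nonexpansive with $\Fix N=\Fix R$ and $T_{\lambda,N}=T_{\lambda/\kappa,R}$, sets $\varepsilon:=\kappa\delta$ and $\lambda_n:=\kappa\mu_n\in[\varepsilon,1-\varepsilon]$, and then invokes \cref{t:mainagain} as a black box applied to $N$. You instead bypass the main theorem and go back to its proof machinery: you use the identity $\overline{\lambda}_R=1/(2\kappa(R))$ together with $\kappa(R)\leq\kappa$ to get $2\overline{\lambda}_R\geq 1/\kappa$, verify the containment $\{\mu_n\}_\nnn\subseteq[\nu,2\overline{\lambda}_R-\nu]$ with $\nu:=\min\{\delta,\thalb\}$, and then chain \cref{c:1228}, \cref{p:1229}, and \cref{f:BBR}. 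Your bookkeeping is sound at the delicate points: \cref{p:1229} and \cref{l:1228} indeed place no constraint on the dominated sequence beyond the pointwise norm bound (so values of $\mu_n$ exceeding $1$ are harmless), the comparison sequence $(\nu)_\nnn$ lies in $[0,1]$ as \cref{p:1229} requires, and $\nu\in\cerouno$ makes \cref{f:BBR} applicable. What your route buys is that it makes explicit that the real constraint in \cref{t:mainagain} is membership in $[\varepsilon,2\overline{\lambda}_R-\varepsilon]$ rather than $[\varepsilon,1-\varepsilon]$, so the averaged version is just the observation that this wider interval contains $[\delta,1/\kappa-\delta]$; what the paper's route buys is brevity and the reusable structural fact $T_{\lambda,N}=T_{\lambda/\kappa,R}$. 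One tiny remark: your appeal to $\kappa(R)>0$ rests on the standing assumption $R\neq\Id$ from \cref{s:main}, which is worth stating explicitly since the theorem itself does not repeat it (though the case $R=\Id$ is trivial anyway).
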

\begin{proof}
Because 
$R$ is $\kappa$-averaged, 
the mapping
\begin{equation}
N := \frac{R-(1-\kappa)\Id}{\kappa}
\end{equation}
is nonexpansive, with $\Fix N = \Fix R$ (and $\kappa(N)=1$), 
and
\begin{equation}
T_{\lambda,N} = T_{\lambda/\kappa,R} = T_{\lambda/\kappa}.
\end{equation}
Now set $\varepsilon := \kappa\delta$ and  $(\forall\nnn)$ $\lambda_n := \kappa\mu_n$. 
Then $(\forall\nnn)$
$\varepsilon \leq \lambda_n\leq 1-\varepsilon$. 
By \cref{t:mainagain}, 
\begin{equation}
T_{\lambda_n,N}\cdots T_{\lambda_1,N}T_{\lambda_0,N}x_0\to P_{\Fix N}x_0.
\end{equation}
On the other hand, $(\forall\nnn)$ $T_{\mu_n}=T_{\lambda_n,N}$ and 
$\Fix N=\Fix R$.
Altogether, the result follows. 
\end{proof}

\subsection{Affine mappings}

In this subsection, we suppose that $b\in X$ and  

\begin{empheq}[box=\mybluebox]{equation}
\label{e:S}
S\colon X\to X\colon x\mapsto Rx+b
\;\;\text{with}\;\;
\Fix S \neq\varnothing.
\end{empheq}

Then the following can be seen easily (see also \cite[Lemma~3.2]{BLM})

\begin{fact}
\label{f:BLM}
There exists a point $a\in X$ such that $b=a-Ra$ and the following hold:
\begin{enumerate}
\item 
$\Fix S = a+\Fix R$.
\item 
\label{f:BLM2}
$(\forall x\in X)$ $P_{\Fix S}x = a+P_{\Fix R}(x-a)$.
\item 
\label{f:BLM3}
$(\forall x\in X)$ $Sx = a + R(x-a)$. 
\end{enumerate}
\end{fact}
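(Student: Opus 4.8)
The plan is to choose $a$ to be \emph{any} point of $\Fix S$ (which is nonempty by \cref{e:S}) and to obtain all three assertions from the linearity of $R$ together with the standard behaviour of metric projections under translation. So I would begin by fixing $a\in\Fix S$; then $a=Sa=Ra+b$, which immediately gives $b=a-Ra$, the existence claim.

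For the identity $Sx=a+R(x-a)$, I would compute, for arbitrary $x\in X$, $Sx=Rx+b=Rx+a-Ra=a+(Rx-Ra)=a+R(x-a)$, where the last step is exactly the linearity of $R$. The formula $\Fix S=a+\Fix R$ then follows at once: by the identity just proved, $x\in\Fix S\iff x=a+R(x-a)\iff x-a=R(x-a)\iff x-a\in\Fix R$.

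It remains to prove the projection formula. Since $R$ is linear and, being nonexpansive, continuous, $\Fix R=\menge{x\in X}{(\Id-R)x=0}$ is a closed linear subspace of $X$; hence $\Fix S=a+\Fix R$ is a nonempty closed affine subspace, and both metric projections appearing in the statement are well defined and single-valued. I would then invoke the translation-equivariance of the projection, $P_{a+C}x=a+P_C(x-a)$, valid for every nonempty closed convex set $C\subseteq X$ (see, e.g., \cite{BC2017}), and apply it with $C=\Fix R$ to conclude $P_{\Fix S}x=a+P_{\Fix R}(x-a)$. I do not anticipate any real obstacle: the argument is a routine verification, the only point worth a moment's care being the well-definedness of $P_{\Fix S}$, i.e.\ the closedness and convexity of $\Fix S$, which is immediate because $\Id-R$ is a bounded linear operator.
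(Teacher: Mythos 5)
Your proof is correct and is precisely the routine verification the paper alludes to when it says the fact "can be seen easily" (citing \cite[Lemma~3.2]{BLM}): taking $a\in\Fix S$, using linearity of $R$ for the pointwise identity, and invoking translation-equivariance of the projection onto the closed subspace $\Fix R$. Since the paper supplies no proof of its own, your write-up simply fills in the omitted details in the expected way.
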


\begin{corollary}
\label{c:BLM}
Let $x_0\in X$. 
Then for every $\nnn$, we have 
\begin{equation}
\label{e:BLM}
T_{\lambda_n,S}\cdots T_{\lambda_0,S}x_0 = 
a+ T_{\lambda_n,R}\cdots T_{\lambda_0,R}(x_0-a),
\end{equation}
where $a$ is as in \cref{f:BLM}. 
\end{corollary}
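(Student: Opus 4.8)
The plan is to prove \cref{e:BLM} by induction on $n$, using \crefpart{f:BLM3}{f:BLM3} together with the definition \cref{e:Tlambda} of the relaxation $T_\lambda$. The key observation is that for an affine map $S\colon x\mapsto Rx+b$ we have, by \crefpart{f:BLM3}{f:BLM3}, the conjugation identity $Sx = a + R(x-a)$, and hence for any $\lambda\in\RR$,
\begin{equation*}
T_{\lambda,S}x = (1-\lambda)x + \lambda Sx = (1-\lambda)x + \lambda\bigl(a+R(x-a)\bigr) = a + \bigl((1-\lambda)(x-a) + \lambda R(x-a)\bigr) = a + T_{\lambda,R}(x-a).
\end{equation*}
In other words, each $T_{\lambda,S}$ is conjugate, via the translation $x\mapsto x-a$, to the corresponding $T_{\lambda,R}$: writing $\tau\colon x\mapsto x-a$, we have $T_{\lambda,S} = \tau^{-1}\circ T_{\lambda,R}\circ \tau$.

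First I would establish the displayed conjugation identity $T_{\lambda,S}(\cdot) = a + T_{\lambda,R}(\cdot - a)$ for each fixed $\lambda$, as above. Then the composition formula follows immediately: since conjugation by a fixed bijection respects composition, $T_{\lambda_n,S}\cdots T_{\lambda_0,S} = \tau^{-1}\circ(T_{\lambda_n,R}\cdots T_{\lambda_0,R})\circ\tau$, which unwound is exactly \cref{e:BLM}. If one prefers to spell it out, a one-line induction works: the base case $n=0$ is the identity just established, and for the inductive step one applies $T_{\lambda_{n},S}$ to the induction hypothesis $T_{\lambda_{n-1},S}\cdots T_{\lambda_0,S}x_0 = a + T_{\lambda_{n-1},R}\cdots T_{\lambda_0,R}(x_0-a)$, using the identity once more with the ``$a+\cdots$'' term to cancel the translations.

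There is essentially no obstacle here; this corollary is a routine bookkeeping consequence of \cref{f:BLM}. The only thing to be slightly careful about is that $a$ depends on $S$ but not on the $\lambda_n$, so the same translation $\tau$ can be pulled through the entire composition — this is precisely why $\Fix S = a + \Fix R$ from \crefpart{f:BLM}{item i} and the affine structure cooperate. One should also note that the identity requires nothing about the $\lambda_n$ (no restriction to $\cerouno$ or $[\varepsilon,1-\varepsilon]$); it holds verbatim for arbitrary real relaxation parameters, which is what makes it useful for transferring \cref{t:mainagain} to the affine setting in the subsequent result.
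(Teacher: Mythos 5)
Your proposal is correct and matches the paper's proof: both establish the single-step identity $T_{\lambda,S}x = a + T_{\lambda,R}(x-a)$ from \crefpart{f:BLM}{f:BLM3} and then extend it to the composition (the paper via the explicit induction you also sketch; your conjugation-by-translation phrasing is just a repackaging of the same step). No gaps.
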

\begin{proof}
Let $x\in X$ and $\lambda\in\RR$. 
By \cref{f:BLM}\cref{f:BLM3},
\begin{subequations}
\label{e:1230a}
\begin{align}
T_{\lambda,S}x 
&= (1-\lambda)x+\lambda Sx
= (1-\lambda)x+\lambda(a+R(x-a))\\
&=
(1-\lambda)(x-a)+\lambda R(x-a)+a\\
&=a+T_{\lambda,R}(x-a).
\end{align}
\end{subequations}
We now prove \cref{e:BLM} by induction on $n$.
The base case $n=0$ is clear from \cref{e:1230a}. 
Now assume that \cref{e:BLM} holds for some $\nnn$.
Then 
\begin{align*}
T_{\lambda_{n+1},S}T_{\lambda_n,S}\cdots T_{\lambda_0,S}x_0
&=
T_{\lambda_{n+1},S}\big(T_{\lambda_n,S}\cdots T_{\lambda_0,S}x_0\big)\\
&=
a+ T_{\lambda_{n+1},R}\big(T_{\lambda_n,S}\cdots T_{\lambda_0,S}x_0-a\big)
\tag{using \cref{e:1230a}}
\\
&= 
a+T_{\lambda_{n+1},R}T_{\lambda_n,R}\cdots T_{\lambda_0,R}(x_0-a)
\tag{using \cref{e:BLM}}
\end{align*}
and we are done.
\end{proof}

We now obtain the following affine generalization of \cref{t:main}:

\begin{theorem}[main result --- more general affine version]
\label{t:mainaffine}
Suppose that there exists 
$\varepsilon>0$ such that 
\begin{equation}
(\forall\nnn)\quad
\varepsilon \leq \lambda_n \leq 1-\varepsilon. 
\end{equation}
Let $x_0\in X$ and generate the sequence $(x_n)_\nnn$ by 
\begin{equation}
\label{e:1230c}
(\forall\nnn)\quad
x_{n+1} := T_{\lambda_n,S}x_n. 
\end{equation}
Then 
\begin{equation}
x_n \to P_{\Fix S}x_0. 
\end{equation}
\end{theorem}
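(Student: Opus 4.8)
The plan is to reduce the affine case to the linear case already settled in \cref{t:mainagain} by exploiting the translation structure exposed in \cref{f:BLM} and \cref{c:BLM}. First I would invoke \cref{f:BLM} to fix a point $a\in X$ with $b=a-Ra$, so that $\Fix S = a+\Fix R$ and, crucially, $P_{\Fix S}x_0 = a + P_{\Fix R}(x_0-a)$ by \cref{f:BLM}\cref{f:BLM2}. Next I would apply \cref{c:BLM} to the iteration \cref{e:1230c}: since $x_{n+1} = T_{\lambda_n,S}x_n$ and $x_0$ is the starting point, an easy induction (or a direct appeal to \cref{e:BLM}) gives
\begin{equation}
(\forall\nnn)\quad x_n = T_{\lambda_{n-1},S}\cdots T_{\lambda_0,S}x_0 = a + T_{\lambda_{n-1},R}\cdots T_{\lambda_0,R}(x_0-a).
\end{equation}

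Then I would set $y_0 := x_0 - a$ and $y_n := T_{\lambda_{n-1},R}\cdots T_{\lambda_0,R}y_0$, so that $y_{n+1} = T_{\lambda_n,R}y_n$ is exactly the linear Krasnosel'ski\u\i--Mann iteration governed by the same parameter sequence $(\lambda_n)_\nnn$, which by hypothesis satisfies $\varepsilon\le\lambda_n\le 1-\varepsilon$. Applying \cref{t:mainagain} (the already-proved linear main result) to the starting point $y_0$ yields $y_n\to P_{\Fix R}y_0 = P_{\Fix R}(x_0-a)$. Adding $a$ to both sides, and using continuity of the translation, gives $x_n = a+y_n \to a + P_{\Fix R}(x_0-a) = P_{\Fix S}x_0$ by \cref{f:BLM}\cref{f:BLM2}, which is the claim.

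There is essentially no hard part here: the entire argument is a change of variables, and all the real work (the monotonicity of the norms along the orbit, \cref{p:1229}, and the appeal to \cref{f:BBR}) is already packaged inside \cref{t:mainagain}. The only points requiring a modicum of care are: verifying that the \cref{c:BLM} identity is applied with the indices matching the iteration \cref{e:1230c} (i.e.\ that $x_n$ corresponds to the $n$-fold composition $T_{\lambda_{n-1},S}\cdots T_{\lambda_0,S}$ and that the shifted iterates $y_n$ genuinely satisfy the recursion $y_{n+1}=T_{\lambda_n,R}y_n$, which follows because translation by $a$ commutes with the affine-to-linear reduction in \cref{e:1230a}); and noting that $\Fix S\neq\varnothing$ guarantees $\Fix R\neq\varnothing$, so that \cref{t:mainagain} is applicable to the linear mapping $R$. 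Once these are observed, the proof is a two-line concatenation of \cref{c:BLM}, \cref{t:mainagain}, and \cref{f:BLM}\cref{f:BLM2}.
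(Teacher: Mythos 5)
Your proof is correct and follows essentially the same route as the paper: fix $a$ via \cref{f:BLM}, use \cref{c:BLM} to rewrite the affine orbit as $a$ plus the linear orbit started at $x_0-a$, apply \cref{t:mainagain} to that linear orbit, and translate back with \cref{f:BLM}\cref{f:BLM2}. The explicit introduction of the shifted iterates $y_n$ is only a cosmetic repackaging of the paper's argument.
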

\begin{proof}
Let $a$ be as in \cref{f:BLM} and \cref{c:BLM}. 
By \cref{t:main}, we have 
\begin{equation}
\label{e:1230b}
T_{\lambda_n,R}\cdots T_{\lambda_0,R}(x_0-a) \to P_{\Fix R}(x_0-a). 
\end{equation}
Then 
\begin{align*}
x_{n+1}
&= 
T_{\lambda_n,S}\cdots T_{\lambda_0,S}x_0
\tag{using \cref{e:1230c}}
\\
&= 
a+ T_{\lambda_n,R}\cdots T_{\lambda_0,R}(x_0-a)
\tag{using \cref{e:BLM}}
\\
&\to a + P_{\Fix R}(x_0-a)
\tag{using \cref{e:1230b}}\\
&= P_{\Fix S}x_0
\tag{using \cref{f:BLM}\cref{f:BLM2}}
\end{align*}
as claimed. 
\end{proof}

\subsection{aBBR: an adaptive variant of Baillon-Bruck-Reich}

\cref{t:main} opens the door for the following adaptive version of the 
Baillon-Bruck-Reich result (\cref{f:BBR}), which we call 
\emph{adaptive Baillon-Bruck-Reich} or \emph{aBBR} for short: 

\begin{theorem}[aBBR]
\label{t:aBBR}
Suppose $R$ is linear, and let $\varepsilon\in\bigl]0,\thalb\bigr]$ and 
$x_0 \in X$. Given $\nnn$, generate the next iterate $x_{n+1}$ 
from the current iterate $x_n$ as follows: 
If $x_n \in \Fix R$, then stop. 
Otherwise, compute 
\begin{equation}
\lambda_{x_n} = \frac{\scal{x_n}{x_n-Rx_n}}{\|x_n-Rx_n\|^2}
\quad\text{and }\quad 
\lambda_n := \min\{\lambda_{x_n},1-\varepsilon\}, 
\end{equation}
and update 
\begin{equation}
x_{n+1} := T_{\lambda_n}x_n. 
\end{equation}
Then 
\begin{equation}
x_n \to P_{\Fix R}x_0. 
\end{equation}
\end{theorem}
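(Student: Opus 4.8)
The plan is to reduce \cref{t:aBBR} to our main result \cref{t:main}. The point is that the hypothesis of \cref{t:main} places no restriction on the relaxation sequence $(\lambda_n)_\nnn$ other than the two-sided bound $\varepsilon\le\lambda_n\le1-\varepsilon$; in particular it does not matter that here $(\lambda_n)_\nnn$ is produced \emph{adaptively} from the running iterates. Thus it suffices to check that the numbers $\lambda_n$ generated by the algorithm obey that bound, and to account for the possibility that the algorithm terminates after finitely many steps.

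First I would establish the parameter bounds. Fix $\nnn$ with $x_n\notin\Fix R$, so that the step producing $x_{n+1}$ is actually carried out; then $\|x_n-Rx_n\|>0$ and $\lambda_{x_n}$ is well defined. Applying \cref{p:1228} with $x:=x_n$ gives $\lambda_{x_n}\ge\thalb$. Since $\varepsilon\le\thalb$ we also have $1-\varepsilon\ge\thalb$, and hence
\[
\lambda_n=\min\{\lambda_{x_n},1-\varepsilon\}\in[\thalb,1-\varepsilon]\subseteq[\varepsilon,1-\varepsilon].
\]
Next I would dispose of termination: for every $\lambda\in\RR$ and $y\in\Fix R$ we have $T_\lambda y=(1-\lambda)y+\lambda Ry=y$, so $T_\lambda$ fixes $\Fix R$ pointwise. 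Consequently, if the algorithm stops at a first index $N$ (that is, $x_N\in\Fix R$), we may simply set $\lambda_n:=\varepsilon$ for all $n\ge N$, which forces $x_{n+1}:=T_{\lambda_n}x_n=x_N$ for all $n\ge N$; this merely appends the constant tail $x_N,x_N,\dots$ to the finite sequence produced by the algorithm. If the algorithm never stops, no such extension is needed.

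In either case we have produced a sequence $(\lambda_n)_\nnn$ contained in $[\varepsilon,1-\varepsilon]$ together with iterates satisfying $x_{n+1}=T_{\lambda_n}x_n$ for every $\nnn$. Invoking \cref{t:main} (equivalently \cref{t:mainagain}) then yields $x_n\to P_{\Fix R}x_0$, and when the algorithm terminates at $N$ this additionally identifies $x_N=P_{\Fix R}x_0$. Under the hood this is exactly the argument of \cref{t:mainagain}: the bound $\lambda_n\in[\varepsilon,1-\varepsilon]$ and \cref{c:1228} give $\|T_{\lambda_n}(\cdot)\|\le\|T_\varepsilon(\cdot)\|$, so \cref{p:1229} applied with $(\mu_n)_\nnn=(\varepsilon)_\nnn$ and $y=P_{\Fix R}x_0$ bounds $\|x_n-P_{\Fix R}x_0\|$ above by $\|T_\varepsilon^{n+1}x_0-P_{\Fix R}x_0\|$, which tends to $0$ by \cref{f:BBR}. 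I do not anticipate any real obstacle here: the only points needing a word of justification are that \cref{t:main} is indifferent to how $(\lambda_n)_\nnn$ is selected, and the trivial bookkeeping around finite termination.
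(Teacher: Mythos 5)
Your proposal is correct and follows exactly the paper's argument: the paper's proof is the one-line observation that $\lambda_n\in\bigl[\thalb,1-\varepsilon\bigr]$ by \cref{p:1228}, so that \cref{t:main} applies. Your additional bookkeeping for the finite-termination case (padding with $\lambda_n:=\varepsilon$ once $x_N\in\Fix R$) is a detail the paper leaves implicit, and it is handled correctly.
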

\begin{proof}
This is a consequence of \cref{t:main} 
because $\lambda_n\in\bigl[\thalb,1-\varepsilon\bigr]$ by \cref{p:1228}. 
\end{proof}

We conclude this paper with the following numerical experiment. 
Suppose $R\in\RR^{2\times 2}$ is nonexpansive with $\Fix R=\{0\}$. 
Given a starting point $x_0\in\RR^2\smallsetminus\{0\}$, we know 
that both the standard Baillon-Bruck-Reich algorithm, which we abbreviate 
as BBR, as well as aBBR produces sequences that converge to $P_{\Fix R}(x_0)=0$ 
(by \cref{f:BBR} and \cref{t:aBBR}). We experimented with various instances of $R$
and found that the behaviour essentially follows two patterns which we illustrate 
in \cref{fig:1}. These plots were generated as follows: Given $R$, we randomly generated 
100 nonzero starting points $x_0$ and we counted how many iterations are need for 
BBR and aBBR to reach $\|x_n\|<\varepsilon := 10^{-6}$. For BBR, we
varied the constant $\lambda$ in the interval $[\varepsilon,1-\varepsilon]$. 
It then happens either that the optimal $\lambda$ for BBR is $1-\varepsilon$ 
in which case the performance of BBR and aBBR is very similar (although 
it takes slightly more work to compute the iterates generated by aBBR).
Or the optimal $\lambda$ is smaller than $1-\varepsilon$ in which case it really pays off 
to run aBBR. In \cref{fig:1}(a), the matrix is 
$R=\begin{psmallmatrix}0.7&0\\0&0.2\end{psmallmatrix}$ 
while for \cref{fig:1}(b), we have 
$R=\begin{psmallmatrix}-0.9&0\\0&0.5\end{psmallmatrix}$. This simple experiment suggests that it might be beneficial to run aBBR rather than straight BBR.

\begin{figure}[ht]
  \centering
  \begin{tabular}{c c c}
   \includegraphics[scale=0.37]{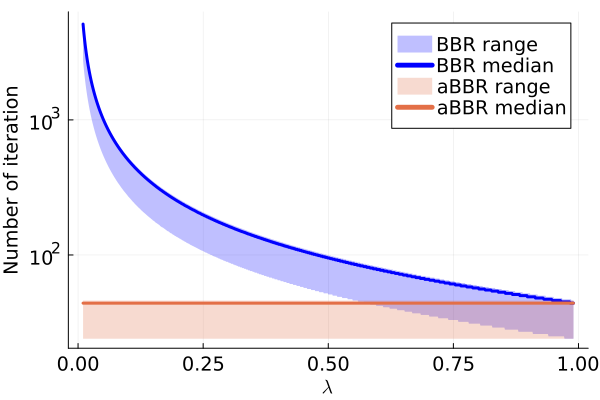}
   &
   &
   \includegraphics[scale=0.37]{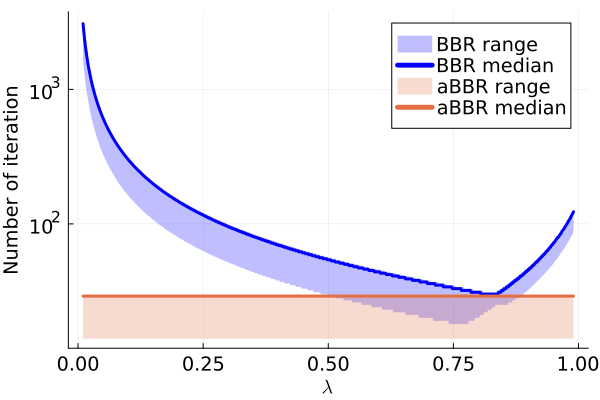}
   \\
   (a) $\lambda_\mathrm{opt} = 1-\varepsilon$
   &
   &
   (b) $\lambda_\mathrm{opt} < 1-\varepsilon$
  \end{tabular}
  \caption{Number of iterations required to achieve $\|x_n\|<\varepsilon$.}
  \label{fig:1}
\end{figure}

\section*{Acknowledgments}
\small
The research of HHB was partially supported by Discovery Grants
of the Natural Sciences and Engineering Research Council of
Canada.

\end{document}